\theoremstyle{plain}
\newtheorem{thm}{Theorem}
\newtheorem{lem}{Lemma}
\newtheorem{prop}{Proposition}
\newtheorem{cor}{Corollary}
\theoremstyle{definition}
\theoremstyle{remark}
\newcommand{\G}{\mathrm{G}}
\newcommand{\St}{\mathrm{S}}
\newcommand{\T}{\mathrm{T}}
\author{Jonathan Chappelon\footnote{Laboratoire de Math\'ematiques Pures et Appliqu\'ees, Universit\'e du Littoral C\^ote d'Opale.}\quad and\quad Akihiro Matsuura\footnote{School of Science and Engineering, Tokyo Denki University.}}
\date{September 2, 2011}
\title{On generalized Frame-Stewart numbers}
\begin{document}

\maketitle

\begin{abstract}
For the multi-peg Tower of Hanoi problem with $k \ge 4$ pegs, so far the best solution is obtained by the Stewart's algorithm \cite{Stewart2} based on the the following recurrence relation:
$$
\St_k(n)=\min_{1 \le t \le n} \bigl\{2 \cdot \St_k(n-t) + \St_{k-1}(t)\bigr\}, \ \ \St_3(n) = 2^n - 1.
$$
In this paper, we generalize this recurrence relation to
$$
\G_k(n) = \min_{1\le t\le n}\bigl\{ p_k\cdot \G_k(n-t) + q_k\cdot \G_{k-1}(t) \bigr\}, \ \ \G_3(n) = p_3\cdot \G_3(n-1) + q_3,
$$
for two sequences of arbitrary positive integers $(p_i)_{i \ge 3}$ and $(q_i)_{i \ge 3}$ and we show that the sequence of differences $(\G_k(n)- \G_k(n-1))_{n \ge 1}$ consists of numbers of the form $(\prod_{i=3}^{k}q_i) \cdot (\prod_{i=3}^{k}{p_i}^{\alpha_i})$, with $\alpha_i\ge 0$ for all $i$, arranged in nondecreasing order. We also apply this result to analyze recurrence relations for the Tower of Hanoi problems on several graphs.
\par\ \par \noindent\textbf{Keywords:} multi-peg Tower of Hanoi, Tower of Hanoi on graphs, Frame-Stewart numbers, generalized Frame-Stewart numbers, recurrence relations, smooth numbers.
\par\ \par\noindent\textbf{MSC2010:} 11A99, 68R05.
\end{abstract}
\section{Introduction}

The Tower of Hanoi problem was introduced by \'{E}douard Lucas in 1883 \cite{Lucas} for the case of 3 pegs and $n$ disks of different sizes. Initially, $n$ disks are placed on one of the 3 pegs with the largest at the bottom. Then, at each time one of the topmost disks is moved to a peg with a larger disk on the top or to an empty peg. The goal of the problem is to transfer all the disks from the initial peg to the peg of destination with the minimum number of moves. A simple recursive argument shows that $2^n-1$ moves are necessary and sufficient to carry out this task. This Tower of Hanoi problem was then extended to the case of 4 pegs by Dudeney in 1907 \cite{Dude} and to arbitrary $k \ge 3$ pegs by Stewart in 1939 \cite{Stewart1}. In 1941, Frame \cite{Frame} and Stewart \cite{Stewart2} independently proposed algorithms which achieve the same numbers of moves for the $k$-peg Tower of Hanoi problem with $k \ge 4$ pegs. Klav\v{z}ar et al.\cite{Klav1} showed that seven different approaches to the $k$-peg Tower of Hanoi problem, including those by Frame and Stewart, are all equivalent, that is, achieve the same numbers of moves. Thus, these numbers are called the {\it Frame-Stewart numbers} \cite{Klav2}.

Somewhat surprisingly, the optimal solution for the multi-peg Tower of Hanoi problem with $k \ge 4$ pegs is not known yet. So far, the best upper bounds are achieved by the Frame-Stewart numbers and the best lower bounds are obtained by Chen et al.\cite{Chen}. Since the upper bounds are believed to be optimal, they are called the ``presumed optimal'' solution.

The Stewart's recursive algorithm for the $k$-peg Tower of Hanoi problem is summarized as follows. For integer $t$ such that $1\leq t\leq n$,
\begin{enumerate}
\item recursively transfer a pile of $n-t$ smallest disks from the first peg to a temporary peg using $k$ pegs;
\item transfer the remaining pile of $t$ largest disks from the first peg to the final peg using $k-1$ pegs, ignoring the peg occupied by the $n-t$ smallest disks;
\item recursively transfer the pile of $n-t$ smallest disks from the temporary peg to the final peg using $k$ pegs.
\end{enumerate}
The algorithm chooses the integer $t$ such that the number of moves $2 \cdot \St_k(n-t) + \St_{k-1}(t)$ is minimized. Thus, the Frame-Stewart numbers $\St_k(n)$ satisfy the following recurrence relations:
$$\St_k(n) = \min_{1 \le t \le n} \bigl\{2 \cdot \St_k(n-t) + \St_{k-1}(t)\bigr\}, \mbox{ for } n \ge 1, \ k \ge 4,$$
$$\St_3(n) = 2^n - 1, \mbox{ for } n \ge 1, \mbox{ and } \St_k(0) = 0, \mbox{ for } k \ge 3.  $$
When $k=4$ for instance, $\St_4(n)$ is obtained by the following simple formula:
$$
\St_4(n) - \St_4(n-1) = 2^{i-1}, \mbox{ for } \binom{i}{2} < n \le \binom{i+1}{2},
$$
where $\binom{i}{2}$ is the binomial coefficient equal to $i(i-1)/2$. In the general case $k \ge 4$, $\St_k(n)$ is obtained by several different approaches, e.g., \cite{Frame, Klav1, Klav2, Majumdar, Stewart2}.

In \protect\cite{Mats}, the following general recurrence relation was considered to clarify the combinatorial structure latent in the recurrence relation for $\St_k(n)$ and to cope with the recurrence relations for the Tower of Hanoi {\it on graphs} in which pegs are placed on vertices of a given graph and disks are only moved along the edges:
$$
\T(n) = \min_{1\le t \le n}
\bigl\{\alpha \cdot \T(n-t) +  \beta \cdot (2^t - 1)  \bigr\}, \mbox{ for } n \ge 1, \mbox{ and } \T(0) = 0,
$$
where $\alpha$ and $\beta$ are arbitrary positive integers. It was shown that the sequence of differences $(\T(n) - \T(n-1))_{n \ge 1}$ consists of numbers of the form $\beta \cdot 2^i\cdot\alpha^j$, with $i,j \ge 0$, arranged in nondecreasing order. When $\alpha = 3$, $2^i \cdot \alpha^j$ increases as $1,2,3,2^2,2\cdot3,2^3,3^2,2^2\cdot3,2^4,2\cdot3^2, \cdots$. These numbers are called ``3-smooth numbers''\cite{Sloane} and have been studied extensively in number theory, in relation to the distribution of prime numbers \cite{Hardy} and to new number representations \cite{Bleck,Erd}. The formulation and analysis of $\T(n)$, however, has some defects such that (i) it is only focused on the 4-peg case with no consideration for the general case $k \ge 3$; and (ii) even in the 4-peg case, term $2^i \cdot\alpha^j$ consists of constant 2 and parameter $\alpha$, which might admit further generalization.

In this paper, we fully generalize the recurrence relations for the previous $\St_k(n)$ and $T(n)$ and obtain the exact formulas. Namely, we define the following recurrence relations for two sequences of arbitrary positive integers $\left(p_i\right)_{i \ge 3}$ and $\left(q_i\right)_{i \ge 3}$:
$$
\G_k(n) = \min_{1 \le t \le n}\bigl\{ p_k\cdot \G_k(n-t) + q_k\cdot \G_{k-1}(t) \bigr\},\ \text{for}\ n\ge 1,\ k\ge 4,
$$
$$
\G_3(n) = p_3\cdot \G_3(n-1)+q_3,\ \text{for}\ n\ge 1, \mbox{ and } \G_k(0) = 0,\ \text{for}\ k\ge 3.
$$
Then, we show that the sequence of differences $(\G_k(n)- \G_k(n-1))_{n \ge 1}$ consists of numbers of the form $(\prod_{i=3}^{k}q_i) \cdot (\prod_{i=3}^{k}{p_i}^{\alpha_i})$, with $\alpha_i\ge 0$ for all $i$, arranged in nondecreasing order. In other words, we show the following theorem.

\begin{thm}\label{thm1}
For every positive integer $n$ and for two sequences of arbitrary positive integers $\left(p_i\right)_{i \geq 3}$ and $\left(q_i\right)_{i \geq 3}$, we have
$$
\G_k(n) = q\cdot\sum_{j=1}^{n}u^k_j
$$
where $q=\prod_{i=3}^{k}q_i$ and $u^k_j$ is the $j$th term of the sequence $\left(u^k_j\right)_{j\geq1}$ of integers $\prod_{i=3}^{k}{p_i}^{\alpha_i}$, with $\alpha_i\geq0$ for all $i$, arranged in nondecreasing order.
\end{thm}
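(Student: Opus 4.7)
My plan is to proceed by induction on $k$, with an inner induction on $n$. For the base case $k=3$, unrolling the linear recurrence $\G_3(n) = p_3 \G_3(n-1) + q_3$ with $\G_3(0)=0$ gives $\G_3(n) = q_3 \sum_{j=0}^{n-1} p_3^j$; since the numbers of the form $p_3^{\alpha_3}$ listed in increasing order are $(1, p_3, p_3^2, \ldots) = (u^3_j)_{j\ge 1}$, this matches the claim with $q = q_3$. For the inductive step, the crucial structural observation is that every generalized smooth number $\prod_{i=3}^k p_i^{\alpha_i}$ either has $\alpha_k=0$ (in which case it equals some $u^{k-1}_j$) or has $\alpha_k\ge 1$ (in which case it equals $p_k \cdot u^k_{j'}$ for some $j'$). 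Hence the sequence $(u^k_j)_{j\ge 1}$ is precisely the sorted merge of $(u^{k-1}_j)_{j\ge 1}$ and $(p_k \cdot u^k_j)_{j\ge 1}$.

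Assuming the theorem for $k-1$ (outer induction) and for $\G_k(m)$ with $m<n$ (inner induction), I substitute into the recurrence:
\begin{align*}
\G_k(n) &= \min_{1\le t\le n}\bigl\{p_k \G_k(n-t) + q_k \G_{k-1}(t)\bigr\} \\
&= q \cdot \min_{1\le t\le n}\left\{\sum_{j=1}^{t} u^{k-1}_j + p_k \sum_{j=1}^{n-t} u^k_j\right\},
\end{align*}
where $q=\prod_{i=3}^k q_i$. It then remains to establish the following elementary lemma: for any two strictly increasing sequences $(a_j)$, $(b_j)$ with sorted merge $(c_j)$, one has $\min_{0\le t\le n}\bigl\{\sum_{j=1}^t a_j + \sum_{j=1}^{n-t} b_j\bigr\} = \sum_{j=1}^n c_j$, because the $n$ smallest merged terms always form a prefix of $(a_j)$ together with a complementary prefix of $(b_j)$. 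Applied with $a_j = u^{k-1}_j$ and $b_j = p_k u^k_j$, and noting that $u^{k-1}_1=1 \le p_k = b_1$ forces the optimal split $t^*$ to satisfy $t^*\ge 1$ (so the recurrence's range $1\le t\le n$ contains the optimum), this yields $\G_k(n) = q\sum_{j=1}^n u^k_j$.

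The main content is the merge-prefix lemma, which I would prove by an exchange argument: if $t^*$ is the number of $a_j$'s among the $n$ smallest merged terms, then any deviation $t\ne t^*$ swaps an element of the optimal prefix for a strictly larger element outside it, so $t = t^*$ is uniquely optimal. All remaining pieces — identifying $q_k \cdot \prod_{i=3}^{k-1} q_i$ with $q$, verifying the boundary $t\ge 1$, and splicing together the two inductions — are routine bookkeeping.
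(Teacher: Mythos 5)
Your proof is correct in substance, and it reaches the result by a genuinely cleaner route than the paper's. Both arguments share the same skeleton: factor out $q=\prod_{i=3}^k q_i$ (the paper's Proposition~\ref{prop2}), run a double induction on $k$ and $n$, and exploit the decomposition of the multiset $\{u^k_j\}$ according to whether $\alpha_k=0$ (giving a copy of $(u^{k-1}_j)$) or $\alpha_k\ge 1$ (giving a copy of $(p_k\cdot u^k_j)$) --- that decomposition is exactly the content of the paper's Lemma~\ref{lem1}. Where you diverge is in how the minimization is handled. The paper introduces the index sequence $(k_j)$ marking where $(u^k)$ meets $(u^{k-1})$, proves in Lemma~\ref{lem2} by a careful analysis of the differences $\G^1_{k,n}(t+1)-\G^1_{k,n}(t)$ that the minimum is attained precisely at $t=j$ for $k_j\le n<k_{j+1}$, and then computes $\Delta\G_k^1(n)$ in two cases. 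You replace all of that with a single general ``merge-prefix'' principle: for sorted sequences, $\min_t\bigl\{\sum_{j\le t}a_j+\sum_{j\le n-t}b_j\bigr\}$ is the sum of the $n$ smallest elements of the merge. This is a real simplification and, as a bonus, it does not require the case split the paper makes when some $p_i=1$ (its Lemma~\ref{lem2} assumes all $p_i>1$, forcing a separate ad hoc argument); your merge identity holds verbatim there, with $(u^k_j)$ degenerating to the all-ones sequence. What you lose is the explicit location of the argmin, which the paper needs for its Corollary ($\G_k(n)=p_k\G_k(n-j)+q_k\G_{k-1}(j)$ for $k_j\le n<k_{j+1}$); your lemma only identifies the minimal value. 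Two small repairs: the sequences $(u^k_j)$ and $(p_k u^k_j)$ are non-decreasing, not strictly increasing (e.g.\ $(1,2,2,4,4,4,\dots)$ for $p_3=p_4=2$), so state the merge-prefix lemma for non-decreasing sequences and drop the ``uniquely optimal'' claim --- with ties the optimal $t$ need not be unique, but the exchange argument still gives the inequality $\ge$ in both directions, which is all you use. Also make explicit that the $n$ smallest elements of the merge can be \emph{chosen} to consist of two prefixes (a tie-breaking convention), since that is the step that makes the minimum attained.
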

We call $\G_k(n)$ the {\it generalized Frame-Stewart numbers}. Note that $\G_k(n)$ is equal to $\St_k(n)$ when $(p_i, q_i) = (2, 1)$ for all $i \ge 3$ and $\G_4(n)$ is equal to $\T(n)$ when $(p_3, q_3) = (2, 1)$ and $(p_4, q_4) = (\alpha, \beta)$.
\par The remaining of the paper is organized as follows. In Section~2, we show some basic properties of the sequence $\left(u^k_j\right)_{j\geq 1}$ defined from $\left(p_i\right)_{i \geq 3}$. In Section~3, we prove Theorem~\ref{thm1}, the main result of this paper. In Section~4, application of these numbers in obtaining upper bounds of the number of moves for the Tower of Hanoi problem on several graphs is provided.

\section{Basic results on smooth number sequences}

Let $\left(p_i\right)_{i\geq3}$ be a sequence of positive integers. We consider the sequence $\left(u^k_j\right)_{j\geq1}$ of all the integers of the form $\prod_{i=3}^{k}{p_i}^{\alpha_i}$, where $\alpha_i\geq0$ for all $i$, arranged in nondecreasing order. For instance, for $(p_3,p_4)=(2,2)$ and $(p_3,p_4)=(2,3)$, the first few terms of $(u^4_j)_{j\geq1}$ are $(1,2,2,2^2,2^2,2^2,2^3,\cdots)$ and $(1,2,3,2^2,2\cdot3,2^3,3^2,\cdots)$, respectively. When there is some $i_0$ such that $p_{i_0}$ is equal to $1$, then by definition $\left(u^k_j\right)_{j\geq1}$ is the constant sequence of $1$'s, for every $k\geq i_0$. We note that $\left(u^k_j\right)_{j\geq1}$ is closely related to {\it smooth numbers} which have been explored extensively in number theory. A positive integer is called {\it $B$-smooth} if none of its prime factors are greater than a positive integer $B$. The sequence $\left(u^k_j\right)_{j\geq1}$ then consists of $B$-smooth numbers for $B = \max_{3 \leq i \leq k}\left\{p_i\right\}$.

In this section, we restrict to the case where all the $p_i$'s are greater than $1$ and prove a simple lemma on a certain ``recursive'' structure of the smooth number sequence $\left(u^k_j\right)_{j\geq1}$, which will be used to prove Theorem~\ref{thm1} in the next section.

\begin{lem}\label{lem1}
Let $k\geq4$ and let $\left(f_k(j)\right)_{j\geq1}$ be the sequence of positive integers defined by $f_k(1)=1$ and $f_k(j)=\min\left\{l>f_k(j-1) \ \middle| \ u^k_l=u^{k-1}_{j}\right\}$ for $j\geq2$. Then, for every integer $n$ such that $f_k(j)<n<f_k(j+1)$, we have $u^k_n = p_k\cdot u^k_{n-j}$.
\end{lem}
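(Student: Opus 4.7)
The plan is to exploit a recursive multiset decomposition of $A=(u^k_j)_{j\ge 1}$ in terms of $B=(u^{k-1}_j)_{j\ge 1}$ and $p_kA=(p_ku^k_m)_{m\ge 1}$. I would first justify the identity
$A = B \sqcup p_k A$ (as multisets) by partitioning the exponent tuples $(\alpha_3,\dots,\alpha_k)\in\mathbb{N}^{k-2}$ that index $A$ according to $\alpha_k=0$ versus $\alpha_k\ge 1$. Tuples with $\alpha_k=0$ contribute exactly the multiset of values of $B$; tuples with $\alpha_k\ge 1$ biject with all of $\mathbb{N}^{k-2}$ via $(\alpha_3,\dots,\alpha_k)\mapsto(\alpha_3,\dots,\alpha_k-1)$, and under this bijection values multiply by $p_k$, so they contribute exactly $p_kA$.

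Next I would identify the $k_j$'s as a canonical embedding of $B$ into $A$. By construction $A(k_j)=u^{k-1}_j=B(j)$ and the indices $k_j$ are strictly increasing, so $(A(k_j))_{j\ge 1}=B$ as a subsequence of $A$. The crucial step is to prove that the complementary subsequence $A'=(A(l))_{l\notin\{k_j\}_{j\ge 1}}$ agrees termwise with $p_kA$. Both sequences are weakly increasing (the first as a subsequence of the weakly increasing sequence $A$, the second as a positive scalar multiple of a weakly increasing sequence), and by the multiset decomposition above they carry the same multiset of values. Since two weakly increasing integer sequences sharing the same multiset of values must agree termwise, I would conclude $A'=p_kA$.

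With this in hand the lemma reduces to counting. Fix $n$ with $k_j<n<k_{j+1}$. Among the indices in $\{1,\dots,n\}$, exactly $k_1,\dots,k_j$ lie in the $B$-embedding (the hypothesis $n<k_{j+1}$ rules out any further one), so the remaining $n-j$ indices belong to the complement of $\{k_i\}$. Since $n$ itself is one of them, it is the $(n-j)$-th complementary index, and therefore $u^k_n=A(n)=A'(n-j)=p_k\cdot u^k_{n-j}$, which is the desired equality.

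The main obstacle, to my eye, is the identification $A'=p_kA$: one must deal with the fact that several exponent tuples may yield the same value (so $A$ is only weakly increasing and has genuine repetitions), and one must check that the greedy rule defining $k_j$, namely taking the minimal admissible index, is compatible with the multiset accounting. My plan sidesteps an explicit case analysis on ties by appealing to the general principle that two weakly increasing integer sequences with identical multisets of values coincide termwise; everything else is bookkeeping.
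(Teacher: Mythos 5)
Your proposal is correct and follows essentially the same route as the paper: the same decomposition of the terms of $(u^k_l)_{l\ge1}$ according to whether $\alpha_k=0$ (giving the subsequence indexed by the $k_j$, which realizes $(u^{k-1}_j)_{j\ge1}$) or $\alpha_k\ge 1$ (giving $(p_k\cdot u^k_l)_{l\ge1}$), followed by counting how many of the first $n$ indices fall in each part. Your finishing step---identifying the complementary subsequence termwise with $(p_k\cdot u^k_l)_{l\ge1}$ via the ``two weakly increasing sequences with the same multiset of values coincide termwise'' principle, valid here since all $p_i>1$ forces finite multiplicities---is if anything slightly more careful about ties than the paper's appeal to the maximality of $u^k_n$ among the first $n$ terms.
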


\begin{proof}
If $f_k(j+1)=f_k(j)+1$, then the lemma is trivial. Suppose now that $f_k(j+1)-f_k(j)\geq2$ and let $n$ be a positive integer such that $f_k(j)<n<f_k(j+1)$. First, consider a term $\prod_{i=3}^{k}{{p_i}^{\alpha_i}}$ of the sequence $(u^k_l)_{l\ge1}$. If $\alpha_k=0$, then $\prod_{i=3}^{k}{{p_i}^{\alpha_i}}=\prod_{i=3}^{k-1}{{p_i}^{\alpha_i}}$ belongs to $(u^k_{f_k(l)})_{l\geq1}$ by definition of $(f_k(l))_{l\geq1}$. Otherwise, if $\alpha_k\ge1$, then $\prod_{i=3}^{k}{{p_i}^{\alpha_i}}=p_k\cdot\left({p_k}^{\alpha_k-1}\cdot\prod_{i=3}^{k-1}{{p_i}^{\alpha_i}}\right)$ belongs to $(p_k\cdot u^k_l)_{l\geq1}$. Now, since $f_k(j)<n<f_k(j+1)$, it follows that $u^k_{f_k(j)}\le u^k_n < u^k_{f_k(j+1)}$ by the growth of the sequence $(u^k_l)_{l\ge1}$. We deduce that
$$
\left\{ u^k_l\ \middle|\ 1\leq l\leq n\right\} \bigcap \left\{u^k_{f_k(l)}\ \middle|\ l\geq1 \right\} = \left\{u^k_{f_k(l)}\ \middle|\ 1\leq l\leq j \right\}.
$$
Therefore, since a term of $\left(u^k_l\right)_{l\geq1}$ belongs to $\left(u^k_{f_k(l)}\right)_{l\geq1}$ or to $\left(p_k\cdot u^k_l\right)_{l\geq1}$, we obtain the following decomposition
$$
\left\{ u^k_l\ \middle|\ 1\leq l\leq n \right\} 
= \left\{ u^k_{f_k(l)}\ \middle|\ 1\leq l\leq j \right\} \bigcup \left\{ p_k\cdot u^k_l\ \middle|\ 1\leq l\leq n-j\right\}.
$$
This decomposition with the maximality of $u^k_n$ leads to
$$
u^k_n
\begin{array}[t]{l}
= \max\left\{u^k_l\ \middle|\ 1\leq l\leq n\right\} \\[2ex]
= \max\left\{ \max\left\{ u^k_{f_k(l)}\ \middle|\ 1\leq l\leq j \right\} , \max\left\{ p_k\cdot u^k_{l}\ \middle|\ 1\leq l\leq n-j \right\} \right\} \\[2ex]
= \max\left\{ u^k_{f_k(j)}\ ,\ p_k\cdot u^k_{n-j} \right\}.
\end{array}
$$
Since the hypothesis $f_k(j)<n<f_k(j+1)$ implies that $u^k_n$ belongs to $\left(p_k\cdot u^k_l\right)_{l\geq1}$, this completes the proof that $u^k_n=p_k\cdot u^k_{n-j}$.
\end{proof}

\section{Proof of Theorem~\ref{thm1}}

Let $\G_k^1(n)$ denotes the special case of $\G_k(n)$ associated with arbitrary sequence $\left(p_i\right)_{i\geq3}$ and with the constant sequence $\left(q_i\right)_{i\geq3}$ with $q_i=1$ for $i \geq 3$. There exists a simple relationship between numbers $\G_k(n)$ and $\G_k^1(n)$.

\begin{prop}\label{prop2}
For every nonnegative integer $n$ and for every sequence of integers $\left(q_i\right)_{i\geq3}$, we have
$$
\G_k(n) = q\cdot\G_k^1(n),
$$
where $q=\prod_{i=3}^{k}q_i$.
\end{prop}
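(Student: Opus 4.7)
The plan is a straightforward double induction, first on $k$ and then on $n$. The key algebraic observation is that the multiplicative constant $q=\prod_{i=3}^{k}q_i$ factors out of the recurrence cleanly because $q_k \cdot \prod_{i=3}^{k-1}q_i = q$.

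First I would handle the base case $k=3$ by induction on $n$. For $n=0$ both sides are $0$. Assuming $\G_3(n-1) = q_3 \cdot \G_3^1(n-1)$, the defining recurrence gives
$$
\G_3(n) = p_3 \cdot \G_3(n-1) + q_3 = p_3\cdot q_3 \cdot \G_3^1(n-1) + q_3 = q_3\bigl(p_3\cdot \G_3^1(n-1) + 1\bigr) = q_3 \cdot \G_3^1(n),
$$
which settles the case $k=3$.

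For the inductive step, fix $k\geq 4$ and suppose the proposition holds for $k-1$, so that $\G_{k-1}(t) = q'\cdot \G_{k-1}^1(t)$ with $q'=\prod_{i=3}^{k-1}q_i$. I would then prove $\G_k(n)=q\cdot \G_k^1(n)$ by induction on $n$. The case $n=0$ is $\G_k(0)=0=q\cdot \G_k^1(0)$. Assuming the equality holds for all $m<n$, apply the defining recurrence and note that for $1\le t\le n$ we have $n-t<n$, so the inductive hypothesis on $n$ is applicable to $\G_k(n-t)$:
$$
\G_k(n) = \min_{1\le t\le n}\bigl\{ p_k\cdot \G_k(n-t) + q_k\cdot \G_{k-1}(t)\bigr\} = \min_{1\le t\le n}\bigl\{ p_k\cdot q\cdot \G_k^1(n-t) + q_k\cdot q'\cdot \G_{k-1}^1(t)\bigr\}.
$$
Since $q_k\cdot q' = q$, this common factor pulls out of the minimum (note $q>0$), giving $\G_k(n) = q\cdot \min_{1\le t\le n}\bigl\{ p_k\cdot \G_k^1(n-t) + \G_{k-1}^1(t)\bigr\} = q\cdot \G_k^1(n)$, as desired.

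There is no real obstacle here: the only thing to check carefully is that the induction on $n$ inside the step for $k$ has access to $\G_k(n-t)$ for all $t\ge 1$, which is immediate since $n-t\le n-1$. The positivity of $q$ (from the standing hypothesis that the $q_i$ are positive integers) is what legitimately lets us factor $q$ through the $\min$.
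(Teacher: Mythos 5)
Your proof is correct and follows essentially the same route as the paper's: a double induction on $k$ and $n$, with the base case $k=3$ handled by a separate induction on $n$, and the inductive step consisting of factoring $q = q_k\cdot\prod_{i=3}^{k-1}q_i$ out of the minimum in the defining recurrence. You spell out the base case and the positivity justification for pulling $q$ through the $\min$ more explicitly than the paper does, but the argument is the same.
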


\begin{proof}
By double induction on $k\geq3$ and $n\geq0$. For $k=3$, we can prove by simple induction on $n$ that $\G_3(n)=q_3\cdot\G_3^1(n)$ for all $n$. For $n=0$, we have $\G_k(0)=q\cdot\G_k^1(0)=0$ for all $k$. Suppose now that the result is true for $k-1$ and all $n\geq0$, and for $k$ and all $l\leq n-1$. By the recursive definition of $\G_k(n)$ and by the assumption of induction, we obtain
$$
\G_k(n) \begin{array}[t]{l}
= \displaystyle\min_{1\leq t\leq n}\left\{ p_k\cdot \G_k(n-t) + q_k\cdot\G_{k-1}(t)\right\} \\[2ex]
= \displaystyle\min_{1\leq t\leq n}\left\{ p_k\cdot \prod_{i=3}^{k}q_i\cdot\G_k^1(n-t) + q_k\cdot\prod_{i=3}^{k-1}q_i\cdot\G_{k-1}^1(t)\right\} \\[2ex]
= \displaystyle\prod_{i=3}^{k}q_i\cdot\min_{1\leq t\leq n}\left\{ p_k\cdot \G_k^1(n-t) + \G_{k-1}^1(t)\right\} \\[3ex]
= q\cdot\G_k^1(n).
\end{array}
$$
\end{proof}

By Proposition~\ref{prop2}, it is sufficient to prove Theorem~\ref{thm1} for $\G_k^1(n)$ instead of $\G_k(n)$. Now, we show at which argument
$
\G_k^1(n) =
 \displaystyle\min_{1\leq t\leq n}\left\{ p_k\cdot \G_k^1(n-t) + \G_{k-1}^1(t)\right\}
$ takes its minimum.

\begin{lem}\label{lem2}
Let $n$ be a positive integer. Suppose that $p_i > 1$ for all $3 \leq i \leq k$. Suppose also that $\Delta\G_{i}^1(l)=\G_{i}^1(l)-\G_{i}^1(l-1)=u^i_l$ for $3\leq i\leq k-1$ and $l\geq1$ and that $\Delta\G_k^1(l)=u^k_l$ for $1\leq l\leq n-1$. Let $j$ be the integer such that $f_k(j)\leq n<f_k(j+1)$. Then, for $1\leq t\leq n$, $\G_{k,n}^1(t)=p_k\cdot\G_k^1(n-t)+\G_{k-1}^1(t)$ takes its minimum at $t=j$.
\end{lem}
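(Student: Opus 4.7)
The plan is to rewrite $\G_{k,n}^1(t)$ as a sum over an $n$-element multiset of smooth numbers and then identify $t=j$ as the choice selecting the $n$ smallest such numbers. By the inductive hypothesis, $\G_{k-1}^1(t) = \sum_{l=1}^{t} u^{k-1}_l$ for every $t \ge 0$ and $\G_k^1(n-t) = \sum_{l=1}^{n-t} u^k_l$ for every $1 \le t \le n$ (since then $n-t \le n-1$), so that
\[
  \G_{k,n}^1(t) \;=\; p_k\sum_{l=1}^{n-t} u^k_l \;+\; \sum_{l=1}^{t} u^{k-1}_l \;=\; \sum_{v \in M_t} v,
\]
where $M_t = \bigl\{p_k u^k_l : 1 \le l \le n-t\bigr\} \cup \bigl\{u^{k-1}_l : 1 \le l \le t\bigr\}$ is an $n$-element multiset of smooth numbers $\prod_{i=3}^{k}p_i^{\alpha_i}$.

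I would then evaluate $\G_{k,n}^1(j)$. Since $k_j \le n < k_{j+1}$, the decomposition obtained inside the proof of Lemma~\ref{lem1},
\[
  \{u^k_l : 1 \le l \le n\} \;=\; \{u^k_{k_l} : 1 \le l \le j\} \;\cup\; \{p_k u^k_l : 1 \le l \le n-j\},
\]
together with the identity $u^{k-1}_l = u^k_{k_l}$, shows that $M_j = \{u^k_1,\ldots,u^k_n\}$ as multisets, and hence $\G_{k,n}^1(j) = \sum_{l=1}^n u^k_l$.

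The main step is the inequality $\G_{k,n}^1(t) \ge \G_{k,n}^1(j)$ for every $1 \le t \le n$, which I would prove by a swap between $M_t$ and $M_j$. For $t > j$, $M_t$ is obtained from $M_j$ by removing the doubled entries $p_k u^k_{n-t+1},\ldots,p_k u^k_{n-j}$ and inserting the undoubled entries $u^{k-1}_{j+1},\ldots,u^{k-1}_t$, and for $t < j$ the roles are exchanged. In both cases every removed entry already appears among $u^k_1,\ldots,u^k_n$, so it is at most $u^k_n$, whereas every inserted entry corresponds to a position $\ge n+1$ in $(u^k_l)_{l \ge 1}$, so it is at least $u^k_{n+1} \ge u^k_n$: undoubled insertions satisfy $u^{k-1}_{j+s} = u^k_{k_{j+s}}$ with $k_{j+s} \ge k_{j+1} \ge n+1$, and doubled insertions $p_k u^k_{n-j+s}$ sit beyond the first $n-j$ doubled terms, which by the decomposition at $n$ are exactly the doubled terms occurring in the first $n$ places of $(u^k_l)$. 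A term-by-term comparison then yields $\G_{k,n}^1(t) \ge \G_{k,n}^1(j)$.

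I expect the main obstacle to be the bookkeeping for doubled insertions when $t < j$: converting the rank of an inserted doubled value $p_k u^k_{n-j+s}$ among all doubled terms of the sequence into the lower bound $p_k u^k_{n-j+s} \ge u^k_{n+1}$, which is precisely what the counting underlying Lemma~\ref{lem1} provides.
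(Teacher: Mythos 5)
Your proposal is correct, but it takes a genuinely different route from the paper's. The paper computes the consecutive differences $\G_{k,n}^1(t+1)-\G_{k,n}^1(t)=-p_k\cdot u^k_{n-t}+u^{k-1}_{t+1}$ and shows, by invoking Lemma~\ref{lem1} at an auxiliary index $m$ (the first place where $k_{j+l+1}-k_{j+l}\geq 2$, respectively $k_{j-l+1}-k_{j-l}\geq 2$), that these differences are $\leq 0$ for $t<j$ and $\geq 0$ for $t\geq j$; that is, it establishes discrete unimodality of $t\mapsto\G_{k,n}^1(t)$ with minimum at $t=j$. You instead read $\G_{k,n}^1(t)$ as the sum of the $n$-element multiset $M_t$, identify $M_j$ with $\left\{u^k_1,\dots,u^k_n\right\}$ via the multiset decomposition appearing inside the proof of Lemma~\ref{lem1}, and run an exchange argument: passing from $M_j$ to $M_t$ removes elements of rank $\leq n$ in $(u^k_l)_{l\geq1}$ and inserts equally many elements of rank $\geq n+1$. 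Your version is more conceptual---it makes visible that $t=j$ is optimal because it selects the $n$ smallest available summands---and it delivers $\G_{k,n}^1(j)=\sum_{l=1}^{n}u^k_l$ as a byproduct, which is precisely what the induction in the proof of Theorem~\ref{thm1} then needs; the paper's version buys the slightly stronger fact that the objective is monotone on each side of $j$. Two details you should make explicit: the decomposition in the proof of Lemma~\ref{lem1} is stated there only for $k_j<n<k_{j+1}$, whereas you use it for $k_j\leq n<k_{j+1}$ (the same counting covers $n=k_j$, since $u^k_{k_j}$ is still among the first $n$ terms and $u^k_{k_{j+1}}$ is not); and the bound $p_k\cdot u^k_{n-j+s}\geq u^k_{n+1}$ for $s\geq1$ needs the one-line justification that exactly $n-j$ doubled terms occupy positions $1,\dots,n$, so the $(n-j+s)$-th smallest doubled term sits at a position $\geq n+1$---which, as you observe, is exactly the counting already carried out in Lemma~\ref{lem1}.
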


\begin{proof}
Since
$$
\G_{k,n}^1(t+1) - \G_{k,n}^1(t)
\begin{array}[t]{l}
= p_k\cdot\G_k^1(n-t-1)+\G_{k-1}^1(t+1) - p_k\cdot\G_k^1(n-t) - \G_{k-1}^1(t)\\[2ex]
= -p_k\cdot(\G_k^1(n-t)-\G_k^1(n-t-1)) + (\G_{k-1}^1(t+1)-\G_{k-1}^1(t))\\[2ex]
= -p_k\cdot\Delta\G_k^1(n-t) + \Delta\G_{k-1}^1(t+1)
\end{array}
$$
for every $1\leq t\leq n-1$, it follows by hypothesis that
$$
\G_{k,n}^1(t+1) - \G_{k,n}^1(t) = -p_k\cdot u^k_{n-t} + u^{k-1}_{t+1}\quad \text{for}\quad 1\leq t\leq n-1.
$$

First, when $1 \leq t\leq j-1$, the growth of the sequences $\left(u^k_l\right)_{l\geq1}$ and $\left(u^{k-1}_l\right)_{l\geq1}$ yields the following inequalities
$$
u^k_{n-t} \geq u^k_{n-j+1} \geq u^k_{f_k(j)-j+1}, \quad u^{k-1}_{t+1}\leq u^{k-1}_j=u^k_{f_k(j)}.
$$
Let $m=\min\left\{ l\geq0  \ \middle| \  f_k(j+l+1)-f_k(j+l)\geq2 \right\}$. Such $m$ always exists. By definition of $f_k(j+l)$, we have $f_k(j+l)=f_k(j)+l$ for $0\leq l\leq m$ and $f_k(j+m)<f_k(j)+m+1<f_k(j+m+1)$. So we deduce from Lemma~\ref{lem1} that
$$
u^k_{f_k(j)+m+1} = p_k\cdot u^k_{(f_k(j)+m+1)-(j+m)} = p_k\cdot u^k_{f_k(j)-j+1}.
$$
Thus,
$$
\G_{k,n}^1(t+1)-\G_{k,n}^1(t)  = -p_k\cdot u^k_{n-t} + u^{k-1}_{t+1}\leq -u^k_{f_k(j)+m+1} + u^k_{f_k(j)} \leq 0
$$
for $1\leq t\leq j-1$. Therefore, $\G_{k,n}^1(t)\geq \G_{k,n}^1(j)$ for all $1\leq t\leq j$.

Similarly, when $j\leq t\leq n-1$, we have
$$
u^k_{n-t} \leq u^k_{n-j} \leq u^k_{f_k(j+1)-j-1},\quad u^{k-1}_{t+1}\geq u^{k-1}_{j+1} = u^k_{f_k(j+1)}.
$$
Let $m=\min\left\{l\geq0  \ \middle| \  f_k(j-l+1)-f_k(j-l)\geq2 \right\}$. If such $m$ does not exist, then $n=f_k(j)=j$ and we already know that $\G_{k,n}^1(t)$ takes its minimum at $t=j$. Suppose now that the integer $m$ exists. By definition of $f_k(j-l+1)$, we have $f_k(j-l+1)=f_k(j+1)-l$ for $0\leq l\leq m$ and $f_k(j-m)<f_k(j+1)-m-1<f_k(j-m+1)$. So we deduce from Lemma~\ref{lem1} that
$$
u^k_{f_k(j+1)-m-1} = p_k\cdot u^k_{(f_k(j+1)-m-1)-(j-m)} = p_k\cdot u^k_{f_k(j+1)-j-1}.
$$
Thus,
$$
\G_{k,n}^1(t+1)-\G_{k,n}^1(t) = -p_k\cdot u^k_{n-t} + u^{k-1}_{t+1} \geq -u^k_{f_k(j+1)-m-1} + u^k_{f_k(j+1)} \geq 0
$$
for $j\leq t\leq n-1$. Therefore, $\G_{k,n}^1(t)\geq\G_{k,n}^1(j)$ for all $j\leq t\leq n$.

Consequently, $\G_{k,n}^1(t)$ takes its minimum at $t = j$.
\end{proof}

We are now ready to prove the main result of this paper.

\begin{proof}[Proof of Theorem~\ref{thm1}]
From Proposition~\ref{prop2}, it is sufficient to prove that
$$
\G_k^1(n) = \sum_{j=1}^{n}u^k_j
$$
for every positive integer $n$. We divide into different cases depending on the values of the terms of the sequence $\left(p_i\right)_{i\geq3}$.
\par\textbf{Case 1:} if $p_i>1$ for all $3\leq i\leq k$. We proceed by double induction on $k\geq3$ and $n\geq1$. For $k=3$, it is clear that $\G_3^1(1)=1$ and, by induction on $n\geq1$, that $\Delta \G_3^1(n) = \G_3^1(n)-\G_3^1(n-1) = p_3\cdot (\G_3^1(n-1) - \G_3^1(n-2)) = p^{n-1}_3=u^3_n$ for all $n \geq 2$. It is also clear that, for arbitrary $k$, $\G_k^1(1) = 1 = u^k_1$. Now assume that $\Delta\G_{i}^1(l)=u^i_l$ for all $3\leq i\leq k-1$ and all $l\geq1$ and that $\Delta\G_k^1(l)=u^k_l$ for all $1\leq l\leq n-1$. Then, we show that $\Delta\G_k^1(n)=u^k_n$ holds. For $n$, there exists some $j\geq1$ such that $f_k(j)\leq n<f_k(j+1)$. It is divided into two subcases: when $n=f_k(j)$ (Subcase 1.1) and when $f_k(j)<n<f_k(j+1)$ (Subcase 1.2).
\par \textbf{Subcase 1.1:} for $n=f_k(j)$. We obtain
$$
\Delta\G_k^1(n)\begin{array}[t]{l}
= \G_k^1(f_k(j))-\G_k^1(f_k(j)-1)\\[1.5ex]
= \G^1_{k,f_k(j)}(j)-\G^1_{k,f_k(j)-1}(j-1)\quad (\text{since}\ f_k(j-1)\leq f_k(j)-1<f_k(j) \text{ and by Lemma~\ref{lem2}})\\[1.5ex]
= p_k\cdot\left(\G_k^1(f_k(j)-j)-\G_k^1((f_k(j)-1)-(j-1))\right) + \left(\G_{k-1}^1(j)-\G_{k-1}^1(j-1)\right)\\[1.5ex]
= \Delta\G_{k-1}^1(j)\\[1.5ex]
= u^{k-1}_j\quad (\text{by\ assumption\ of\ induction})\\[1.5ex]
= u^k_{f_k(j)}\quad (\text{by\ definition\ of}\ f_k(j))\\[1.5ex]
= u^k_n.
\end{array}
$$
Thus, the proof is shown in this case.
\par \textbf{Subcase 1.2:} for $f_k(j)<n<f_k(j+1)$. We obtain
$$
\Delta\G_k^1(n)\begin{array}[t]{l}
= \G_k^1(n)-\G_k^1(n-1)\\[1.5ex]
= \G_{k,n}^1(j)-\G^1_{k,n-1}(j)\quad (\text{since}\ f_k(j)\leq n-1<f_k(j+1)\text{ and by Lemma~\ref{lem2}})\\[1.5ex]
= p_k\cdot\left(\G_k^1(n-j)-\G_k^1(n-1-j)\right) + \left(\G_{k-1}^1(j)-\G_{k-1}^1(j)\right)\\[1.5ex]
= p_k\cdot\Delta\G_k^1(n-j)\\[1.5ex]
= p_k\cdot u^k_{n-j}\quad (\text{by\ assumption\ of\ induction})\\[1.5ex]
= u^k_n\quad (\text{by\ Lemma~\ref{lem1}}).
\end{array}
$$
Thus, the proof is shown in this case, and this completes the proof of Case~1.
\par\textbf{Case 2:} if $p_i=1$ for some $3\leq i\leq k$. Let $m=\min\left\{3\leq i\leq k\ \middle|\ p_i=1 \right\}$. It is further divided into two subcases: when $k=m$ (Subcase 2.1) and when $k>m$ (Subcase 2.2).
\par\textbf{Subcase 2.1:} for $k=m$. If $k=m=3$, then $p_3=1$. In this case, it is clear that $\G_3^1(n)=n$ for all $n\geq1$. If $k=m\geq4$, that is, if $p_{k}=1$ and $p_i>1$ for all $3\leq i\leq k-1$, we proceed by induction on $n\geq1$. For $n=1$, we have $\G_k^1(1)=1$. Then assume that $\G^1_{k}(l)=l$ for $1\leq l\leq n-1$. By definition,
$$
\G^1_{k}(n) = \min_{1\leq t\leq n}\left\{\G^1_{k}(n-t)+\G^1_{k-1}(t)\right\} = \min_{1\leq t\leq n}\left\{(n-t)+\G^1_{k-1}(t)\right\}.
$$
Since $p_i>1$ for all $3\leq i\leq k-1$, we know that $\G^1_{k-1}(l)=\sum_{j=1}^{l}u^{k-1}_j$ for $l\geq1$ from Case~1. It is clear that $u^{k-1}_j\geq1$ for all $1\leq j\leq l$. Therefore, we have $\G^1_{k-1}(l)\geq l$ for $l\geq1$. So $\G^1_{k,n}(t)=(n-t)+\G^1_{k-1}(t)$ takes its minimum at $t=1$ and $\G^1_{k}(n)=(n-1)+1=n$ as announced.
\par\textbf{Subcase 2.2:} for $k> m$. We proceed by double induction on $k\geq m$ and $n\geq1$. We know that $\G^1_m(l)=l$ for all $l\geq1$ from Subcase~2.1. We also know that $\G^1_i(1)=1$ for all $i\geq3$.
Now, assume that $\G_{k-1}^1(l)=l$ for all $l\geq1$ and that $\G^1_{k}(l)=l$ for all $1\leq l\leq n-1$. We obtain
$$
\G^1_{k}(n) = \min_{1\leq t\leq n}\left\{\G^1_{k}(n-t)+\G_{k-1}^1(t)\right\} = \min_{1\leq t\leq n}\left\{(n-t)+t\right\} = n.
$$
This concludes the proof of Case~2, and thus the proof of Theorem~\ref{thm1}.
\end{proof}

\begin{cor}
Let $k\geq4$ and $j\geq1$. For every integer $n$ such that $f_k(j)\leq n<f_k(j+1)$,
$$
\G_k(n) = p_k\cdot\G_k(n-j)+q_k\cdot\G_{k-1}(j).
$$
\end{cor}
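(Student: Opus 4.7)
The plan is to combine Proposition~\ref{prop2} (which factors out the constant $q=\prod_{i=3}^{k}q_i$) with Lemma~\ref{lem2} (which identifies the index achieving the minimum in the recurrence defining $\G_k^1$). First I would observe that substituting the identities $\G_k(m)=q\cdot\G_k^1(m)$ and $\G_{k-1}(t)=(q/q_k)\cdot\G_{k-1}^1(t)$ from Proposition~\ref{prop2} into the expression being minimized gives
$$p_k\cdot\G_k(n-t)+q_k\cdot\G_{k-1}(t)=q\cdot\bigl(p_k\cdot\G_k^1(n-t)+\G_{k-1}^1(t)\bigr)=q\cdot\G_{k,n}^1(t),$$
so the minimization problems defining $\G_k(n)$ and $q\cdot\G_k^1(n)$ attain their minima at exactly the same index $t$.

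Next I would invoke Lemma~\ref{lem2}: its conclusion is precisely that, for $k_j\leq n<k_{j+1}$, the function $\G_{k,n}^1(t)$ is minimized at $t=j$. The hypotheses of Lemma~\ref{lem2}, namely $\Delta\G_i^1(l)=u^i_l$ for $3\leq i\leq k-1$ and all $l\geq 1$ and $\Delta\G_k^1(l)=u^k_l$ for $1\leq l\leq n-1$, are available at this point because they have just been established in Theorem~\ref{thm1}.

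Finally, substituting $t=j$ into the recurrence for $\G_k(n)$ and reassembling the factor $q_k$ in front of $\G_{k-1}(j)$ via Proposition~\ref{prop2} yields
$$\G_k(n)=p_k\cdot\G_k(n-j)+q_k\cdot\G_{k-1}(j),$$
as required. I do not expect a genuine obstacle: once Proposition~\ref{prop2} and Lemma~\ref{lem2} are in hand, the corollary is essentially a bookkeeping consequence of the fact that the two minimization problems have the same minimizer. The only subtlety worth flagging is the degenerate case where some $p_i$ equals $1$: then the sequence $(u^k_j)_{j\geq 1}$ is eventually constant and the sequence $(k_j)_{j\geq 1}$ from Lemma~\ref{lem1} is not well defined for all $j$, so this case lies outside the scope of the statement.
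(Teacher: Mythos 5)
Your argument is correct and is exactly the paper's proof, which is stated simply as ``From Proposition~\ref{prop2}, Theorem~\ref{thm1} and Lemma~\ref{lem2}''; you have merely spelled out the bookkeeping (factoring out $q$, matching minimizers, and noting that Theorem~\ref{thm1} supplies the hypotheses $\Delta\G_i^1(l)=u^i_l$ needed for Lemma~\ref{lem2}). Your remark that the case with some $p_i=1$ falls outside the scope of the statement is consistent with the paper's standing assumption in Section~2 that all $p_i>1$ wherever the sequence $(k_j)_{j\geq1}$ is used.
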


\begin{proof}
 From Proposition~\ref{prop2}, Theorem~\ref{thm1} and Lemma~\ref{lem2}.
\end{proof}

We end this section in considering the special case where $p_i=p\geq1$ for all $i$.

\begin{prop}\label{prop3}
Let $p_i=p\geq1$ for all $3\leq i\leq k$. Then, for all integers $j\geq0$ and $n\geq1$ such that
$$
\displaystyle\binom{k+j-3}{k-2} < n \leq \binom{k+j-2}{k-2},
$$
$u^k_n=p^j$ and $\G_k^1(n)$ can be computed as follows:
$$
\G_k^1(n) = \sum_{m=0}^{j-1}\binom{k+m-3}{k-3}p^m + \left(n-\binom{k+j-3}{k-2}\right)p^{j}.
$$
\end{prop}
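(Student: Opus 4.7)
The plan is to combine Theorem~\ref{thm1} (with $q=1$, since we work with $\G_k^1$) and Proposition~\ref{prop1} directly, and then bookkeep the powers of $p$ using a single binomial identity. By Theorem~\ref{thm1},
$$
\G_k^1(n) = \sum_{l=1}^{n} u^k_l,
$$
and by Proposition~\ref{prop1} the sequence $(u^k_l)_{l\ge 1}$ is constant on each window $\binom{k+m-3}{k-2} < l \le \binom{k+m-2}{k-2}$, with value $p^m$. So the strategy is simply to split the sum $\sum_{l=1}^{n}u^k_l$ into the complete windows corresponding to $m=0,1,\dots,j-1$, plus the partial window corresponding to $m=j$ (in which, by hypothesis, $n$ lies).

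First I would count the length of each window: for $m\ge 0$,
$$
\#\bigl\{l \ \bigl|\ u^k_l = p^m\bigr\} = \binom{k+m-2}{k-2}-\binom{k+m-3}{k-2} = \binom{k+m-3}{k-3},
$$
by Pascal's identity. Hence the full windows for $m=0,1,\dots,j-1$ contribute
$$
\sum_{m=0}^{j-1}\binom{k+m-3}{k-3}\,p^m,
$$
which exhausts exactly the indices $1\le l\le \binom{k+j-3}{k-2}$ (using the hockey stick identity $\sum_{m=0}^{j-1}\binom{k+m-3}{k-3}=\binom{k+j-3}{k-2}$ as a sanity check). Then the remaining indices $\binom{k+j-3}{k-2}<l\le n$ all satisfy $u^k_l=p^j$ by Proposition~\ref{prop1}, so they contribute $\bigl(n-\binom{k+j-3}{k-2}\bigr)p^j$. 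Adding the two pieces gives the announced formula.

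The case $p=1$ needs a brief separate mention, but requires no real work: then $u^k_l=1$ for every $l$, so $\G_k^1(n)=n$, and the right-hand side of the claimed formula collapses to $\binom{k+j-3}{k-2}+\bigl(n-\binom{k+j-3}{k-2}\bigr)=n$ by the same hockey stick identity. Honestly there is no hard step here; the only thing to be careful about is keeping the index shifts in the binomial coefficients straight (in particular that the length of window $m$ is $\binom{k+m-3}{k-3}$ and not $\binom{k+m-2}{k-3}$), which is why I would write the Pascal reduction explicitly before summing.
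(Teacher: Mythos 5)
Your proof is correct and uses exactly the same ingredients as the paper's own proof, namely Theorem~\ref{thm1} with $q=1$ (so that $\G_k^1(n)=\sum_{l=1}^n u^k_l$) together with Proposition~\ref{prop1}; the only difference is that you evaluate the sum directly by grouping it into windows of length $\binom{k+m-3}{k-3}$ via Pascal and the hockey-stick identity, whereas the paper organizes the identical computation as an induction on $n$ with two cases (start of a window versus interior of a window). Both arguments are sound, and your window counts and index shifts check out.
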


\begin{proof}
Let $j$ be a nonnegative integer. First, we can determine $C_j$ the number of values of $n$ such that $u^k_n=p^j$. Then, since $C_j$ corresponds to the number of ways to distribute $j$ identical balls into $k-2$ distinct urns or the number of ways of partitioning $j$ into $k-2$ ordered non-negative summands, we have
$$
C_j = \binom{(k-2)+j-1}{(k-2)-1} = \binom{k+j-3}{k-3}.
$$
Now let $D_j$ be the number of values of $n$ such that $u^k_n<p^j$. Here we have
$$
D_j = \sum_{m=0}^{j-1}C_m = \sum_{m=0}^{j-1}\binom{k+m-3}{k-3}=\binom{k+j-3}{k-2}.
$$
It follows that $u^k_n=p^j$ exactly when $D_j<n\leq D_j+C_j=D_{j+1}$, that is, when
$$
\binom{k+j-3}{k-2} < n \leq \binom{k+j-3}{k-2} + \binom{k+j-3}{k-3} = \binom{k+j-2}{k-2}
$$
as claimed. This leads to the equality, for $D_j<n\leq D_{j+1}$,
$$
\G_k^1(n) = \sum_{m=0}^{j-1}C_m\cdot p^m + (n-D_j)\cdot p^j = \sum_{m=0}^{j-1}\binom{k+m-3}{k-3}p^m + \left(n-\binom{k+j-3}{k-2}\right)p^{j}.
$$
\end{proof}

\section{Application: the Tower of Hanoi on graphs}

Let $G=(V,E)$ be a simple graph with the set of vertices $V=\{v_1,\ldots,v_k\}$ and the set of edges $E$. A $k$-peg Tower of Hanoi problem can be considered on $G$: the $k$ pegs are placed on the vertices $v_1,\ldots,v_k$ and transfer of disks is allowed between the pegs $v_i$ and $v_j$ only if there is an edge between $v_i$ and $v_j$. The original $k$-peg Tower of Hanoi problem then corresponds to the Tower of Hanoi problem on the complete graph $\mathrm{K}_k$. The cases of $k=3$ and $k=4$ are illustrated in Figure~\ref{fig1}.

\begin{figure}[!h]
\begin{center}
\begin{tabular}{cccccc}

\begin{tikzpicture}
\node (A) at (0,0) [circle,draw] {$1$};
\node (B) at (2,0) [circle,draw] {$2$};
\node (C) at (1,1.73205081) [circle,draw] {$3$};
\draw (A) -- (B) -- (C) -- (A);
\end{tikzpicture}
& & & & &
\begin{tikzpicture}
\node (A) at (0,0) [circle,draw] {$1$};
\node (B) at (2,0) [circle,draw] {$2$};
\node (C) at (2,2) [circle,draw] {$3$};
\node (D) at (0,2) [circle,draw] {$4$};
\draw (A) -- (B) -- (C) -- (D) -- (A) -- (C);
\draw (D) -- (B);
\end{tikzpicture}

\end{tabular}
\end{center}
\caption{\label{fig1}The original Tower of Hanoi problem with $3$ pegs ($\mathrm{K}_3$) and $4$ pegs ($\mathrm{K}_4$).}
\end{figure}
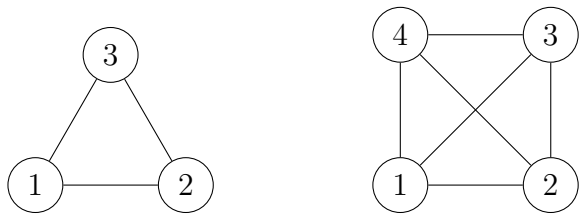

The main application of the generalized Frame-Stewart numbers is in giving upper bounds of the number of moves for the Tower of Hanoi problem on some simple graphs. For the Tower of Hanoi problem on the complete graph with $k\ge 3$ vertices and $n\ge 0$ disks, we retrieve the Frame-Stewart numbers $\St_k(n)$ stated in Section~1. In the sequel of this section, we consider other special cases where $G$ is the path graph $\mathrm{P}_3$ or the star graph $\mathrm{S}_k$.

\subsection{On the path graph $\mathrm{P}_3$}

The following theorem shows that the optimal number of moves for the Tower of Hanoi problem on the path graph $\mathrm{P}_3$ is given by the generalized Frame-Stewart numbers.

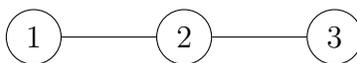
\begin{figure}[!h]
\begin{center}
\begin{tikzpicture}
\node (1) at (0,0) [circle,draw] {1};
\node (2) at (2,0) [circle,draw] {2};
\node (3) at (4,0) [circle,draw] {3};
\draw (1) -- (2) -- (3);
\end{tikzpicture}
\end{center}
\caption{\label{fig2}The path graph $\mathrm{P}_3$.}
\end{figure}

\begin{thm}\label{thm2}
Consider the Tower of Hanoi problem on $\mathrm{P}_3$, as depicted in Figure~\ref{fig2}. The minimum number of moves to transfer $n\geq1$ disks
\begin{itemize}
\item
from peg 1 to peg 3 is $\G_3(n)=2\cdot\sum_{i=0}^{n-1}3^i$, where $(p_3,q_3)=(3,2)$;
\item
from peg 1 to peg 2 is $\G_3^1(n)=\sum_{i=0}^{n-1}3^i$, where $(p_3,q_3)=(3,1)$.
\end{itemize}
\end{thm}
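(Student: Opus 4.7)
The plan is to prove both parts simultaneously by induction on $n$. Let $a(n)$ denote the minimum number of moves from peg $1$ to peg $3$ and $b(n)$ the minimum from peg $1$ to peg $2$, with $a(0)=b(0)=0$; by the reflection symmetry of $\mathrm{P}_3$, the same values govern the transfers $3\to 1$ and $3\to 2$. The goal is to establish the recurrences $a(n)=3a(n-1)+2$ and $b(n)=a(n-1)+b(n-1)+1$, and then identify them with the defining relations of $\G_3$ and $\G_3^1$ at $(p_3,q_3)=(3,2)$ and $(3,1)$, with the closed forms supplied by Theorem~\ref{thm1}.

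For the upper bounds I would exhibit explicit strategies. To achieve $a(n)\le 3a(n-1)+2$, recursively transfer the top $n-1$ disks $1\to 3$, move the largest disk $1\to 2$, transfer the $n-1$ disks $3\to 1$, move the largest disk $2\to 3$, and finally transfer the $n-1$ disks $1\to 3$ once more. For $b(n)\le a(n-1)+b(n-1)+1$, transfer the $n-1$ disks $1\to 3$, move the largest $1\to 2$, then transfer the $n-1$ disks $3\to 2$. These subroutines are legal because any smaller disk may rest on top of the largest, so its presence on an intermediate peg never obstructs the smaller-disk subproblem, which is itself a Tower of Hanoi instance on $\mathrm{P}_3$.

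For the matching lower bounds, I would trace the trajectory of the largest disk. Since $\mathrm{P}_3$ has no edge joining pegs $1$ and $3$, in any transfer $1\to 3$ the largest disk must perform at least one move $1\to 2$ and at least one move $2\to 3$. Consider the first such $1\to 2$ move and the first subsequent $2\to 3$ move. Just before the $1\to 2$ move, peg $2$ must be empty, forcing all $n-1$ smaller disks onto peg $3$; reaching that configuration from the initial state requires at least $a(n-1)$ moves. Just before the $2\to 3$ move, peg $3$ must be empty, so the smaller disks are now on peg $1$; their migration from peg $3$ to peg $1$ during the intervening interval costs at least $a(n-1)$ more moves. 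After this $2\to 3$ move, the smaller disks lie on peg $1$ and still have to reach peg $3$, a further $a(n-1)$ moves. Adding the (at least two) moves of the largest disk yields $a(n)\ge 3a(n-1)+2$. An analogous but shorter argument gives $b(n)\ge a(n-1)+b(n-1)+1$ by isolating the unique forced $1\to 2$ move of the largest disk.

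Combining the two recurrences with the inductive identity $a(n-1)=2b(n-1)$ gives $b(n)=3b(n-1)+1$, which together with $a(n)=3a(n-1)+2$ and the initial values matches exactly the recursion $\G_3(n)=p_3\G_3(n-1)+q_3$ for the two claimed parameter pairs; Theorem~\ref{thm1} then delivers the explicit sums $2\sum_{i=0}^{n-1}3^i$ and $\sum_{i=0}^{n-1}3^i$. I expect the main obstacle to be the rigorous justification of the lower bound: one must verify that the subsequence of smaller-disk moves restricted to each interval between consecutive moves of the largest disk is itself a valid Tower of Hanoi process on $\mathrm{P}_3$, with source and destination dictated by the positions of the smaller stack at the endpoints of the interval, so that the inductive bound $a(n-1)$ legitimately applies to each of the three phases.
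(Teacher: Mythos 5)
Your proposal is correct and follows essentially the same route as the paper: the same recursive strategies yield the recurrences $a(n)=3a(n-1)+2$ and $b(n)=a(n-1)+b(n-1)+1=3b(n-1)+1$ (using $a(m)=2b(m)$, which the paper gets from Proposition~\ref{prop2}), and these are then identified with the defining relations of $\G_3$ and $\G_3^1$. The only difference is one of emphasis: you make the lower bound explicit by tracking the forced trajectory of the largest disk, whereas the paper compresses this into the assertion that each phase of the strategy is forced; your version is the more rigorous rendering of the same argument.
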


Though the fact of this theorem is rather well-known (e.g., see \cite{Sapir}), we present a short proof to see the connection with the generalized Frame-Stewart numbers.
\begin{proof}
We begin with the transfer between peg 1 and peg 3. In order to move the biggest disk from peg 1 to peg 3, we have to first move it from peg 1 to peg 2 and so the $n-1$ smallest disks must be on peg 3. The $n-1$ smallest disks are transferred from peg 1 to peg 3 in $\G_3(n-1)$ moves. Then, we move the biggest disk from peg 1 to peg 2. In order to move this disk to peg 3, we transfer the $n-1$ smallest disks from peg 3 to peg 1 in $\G_3(n-1)$ moves. Finally, we put the biggest disk from peg 2 to peg 3 in $1$ move and the $n-1$ smallest disks from peg 1 to peg 3 in $\G_3(n-1)$ moves. The total number of moves for $n$ disks is then $3\cdot\G_3(n-1)+2$, which corresponds to $\G_3(n)$ as announced. Since this is the best possible, $\G_3(n)$ is the optimal number of moves.
\par For the transfer between peg 1 and peg 2, as before, in order to move the biggest disk from peg 1 to peg 2, we have to first transfer the $n-1$ smallest disks from peg 1 to peg 3. As proved above, the minimum number of moves to do this is $\G_3(n-1)$. Moreover, we know that $\G_3(n-1)=2\cdot\G_3^1(n-1)$ by Proposition~\ref{prop2}. Then, after moving the biggest disk from peg 1 to peg 2, the $n-1$ smallest disks are transferred from peg 3 to peg 2. It is done in $\G_3^1(n-1)$ moves. Thus, we conclude that the minimum number of moves for transferring $n$ disks from peg 1 to peg 2 is $3\cdot\G_3^1(n-1)+1$ as announced.
\end{proof}

\subsection{On the star graph $\mathrm{S}_k$}

We end this section by considering the Tower of Hanoi problem on the star graph $\mathrm{S}_k$ with $k$ vertices and $k-1$ edges. For $k=3$, the graph $\mathrm{S}_3$ corresponds to the path graph $\mathrm{P}_3$. The star graphs for $k=4$ and $k=5$ are depicted in Figure~\ref{fig3}.

\begin{figure}[h!]
\begin{center}
\begin{tabular}{ccccc}

\begin{tikzpicture}

\node (1) at (0,1.15470054) [circle,draw] {1};
\node (2) at (0,3.46410162) [circle,draw] {2};
\node (3) at (-2,0) [circle,draw] {3};
\node (4) at (2,0) [circle,draw] {4};

\draw (2) -- (1) -- (3);
\draw (1) -- (4);

\end{tikzpicture}
& & & &
\begin{tikzpicture}

\node (1) at (0,2) [circle,draw] {1};
\node (2) at (-2,4) [circle,draw] {2};
\node (3) at (2,4) [circle,draw] {3};
\node (4) at (2,0) [circle,draw] {4};
\node (5) at (-2,0) [circle,draw] {5};

\draw (2) -- (1) -- (4);
\draw (3) -- (1) -- (5);

\end{tikzpicture}
\\
\end{tabular}
\end{center}
\caption{\label{fig3}The star graphs $\mathrm{S}_4$ and $\mathrm{S}_5$.}
\end{figure}

Stockmeyer \cite{Stock} considered the Tower of Hanoi problem on the star graph $\mathrm{S}_4$, where all the $n$ disks are transferred from one leaf of the graph to another leaf (for instance, the problem of transferring disks in the minimal number of moves from peg~2 to peg~3 in Figure~\ref{fig3}). He described a recursive algorithm which achieved a good (seemingly the best) upper bound; thus, called it the ``presumed optimal'' algorithm. Here, we generalize this algorithm to the star graph $\mathrm{S}_k$ for arbitrary $k \geq 3$ and show that disks can be transferred from one leaf to another in $\G_{k}(n)$ moves.

\begin{thm}\label{thm3}
Let $k\ge 3$ be an integer. Consider the Tower of Hanoi problem on the star graph $\mathrm{S}_k$ in which $n\geq 1$ disks are transferred from one leaf of the graph to another leaf. Then, an upper bound on the minimal number of moves to solve this problem is given by the generalized Frame-Stewart number $\G_{k}(n)$, where $(p_3, q_3) = (3,2)$ and $(p_i, q_i) = (2, 1)$ for $4 \leq i \leq k$.
\end{thm}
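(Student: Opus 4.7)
The plan is to generalize Stockmeyer's algorithm from $\mathrm{S}_3$ to arbitrary $\mathrm{S}_k$ and verify that its move count satisfies exactly the recurrence defining $\G_{k+1}(n)$ with the specified parameters. Fix $k \geq 3$, label the leaves $L_1, L_2, \ldots, L_k$, and suppose that the $n$ disks initially rest on $L_1$ and must end on $L_2$. For each choice of $t$ with $1 \leq t \leq n$, I propose the following recursive strategy: (i) recursively transfer the $n-t$ smallest disks from $L_1$ to some third leaf $L_3$, again on $\mathrm{S}_k$; (ii) recursively transfer the $t$ largest disks from $L_1$ to $L_2$, using only the center and the remaining $k-1$ leaves, which induce a star graph $\mathrm{S}_{k-1}$; (iii) recursively transfer the $n-t$ smallest disks from $L_3$ to $L_2$ on $\mathrm{S}_k$. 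Step~(ii) is legitimate because every disk moved during it is strictly larger than every disk parked on $L_3$, hence $L_3$ is effectively excluded from the sub-problem.

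Let $U_k(n)$ denote the number of moves produced by the algorithm with $t$ chosen optimally, with the convention $U_k(0) = 0$. Each of steps (i) and (iii) costs $U_k(n-t)$ moves and step (ii) costs $U_{k-1}(t)$ moves, so minimizing over $t$ yields
$$
U_k(n) \leq \min_{1 \leq t \leq n}\bigl\{2\,U_k(n-t) + U_{k-1}(t)\bigr\}, \qquad \text{for all } k \geq 3,\ n \geq 1.
$$
For the base case $k = 2$, the graph $\mathrm{S}_2$ is precisely $\mathrm{P}_3$, and Theorem~\ref{thm2} gives the exact equality $U_2(n) = \G_3(n)$ with parameters $(p_3, q_3) = (3, 2)$. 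Comparing the displayed recurrence with the definition of $\G_{k+1}(n)$ for the choice $(p_{k+1}, q_{k+1}) = (2, 1)$ when $k \geq 3$, a straightforward double induction on $k$ and $n$ closes the argument: $U_k(n) \leq \G_{k+1}(n)$ for all $k \geq 2$ and $n \geq 1$, which is the claimed upper bound.

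The main obstacle is the careful tracking of indices and parameters rather than any deep combinatorial step. Because $\mathrm{S}_k$ carries $k+1$ pegs, the correct target is $\G_{k+1}$ (not $\G_k$), and the reduction in step~(ii) must drop \emph{both} the peg count and the subscript by one, matching the shift from $\G_{k+1}$ to $\G_k$ in the recurrence. The base-case parameters $(p_3, q_3) = (3, 2)$ record the three recursive calls and two single-disk transfers in the $\mathrm{P}_3$ algorithm of Theorem~\ref{thm2}, while the uniform choice $(p_i, q_i) = (2, 1)$ for $i \geq 4$ records the two outer recursive calls (of type (i) and (iii)) and the single middle call (of type (ii)) of the star algorithm. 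Once this correspondence is set up, the induction is routine and the theorem follows.
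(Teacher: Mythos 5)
Your proposal is correct and follows essentially the same route as the paper: the identical three-phase decomposition (park the $n-t$ smallest disks on a spare leaf, solve the $t$ largest on the induced $\mathrm{S}_{k-1}$, then bring the small disks home), the same base case $\mathrm{S}_2 = \mathrm{P}_3$ handled by Theorem~\ref{thm2}, and the same identification of the resulting recurrence with the definition of $\G_{k+1}(n)$ for $(p_{k+1},q_{k+1})=(2,1)$. Your explicit justification that the parked disks do not obstruct the middle phase is a welcome detail the paper leaves implicit.
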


\begin{proof}
By induction on $k$ of $\mathrm{S}_k$. When $k=3$, as noted before, the star graph $\mathrm{S}_3$ corresponds to the path graph $\mathrm{P}_3$. So by Theorem~\ref{thm2}, $\G_3(n)$, where $(p_3, q_3) = (3, 2)$, is the minimum number of moves to transfer $n$ disks from peg 2 to peg 3. For $k\geq4$ and $n=1$, we can transfer one disk from peg~2 to peg~3 in only $\G_k(1)=2$ moves. Suppose now that the result is true for any number of disks up to $\mathrm{S}_{k-1}$ and until $n-1$ disks for $\mathrm{S}_k$. Then, $n$ disks are recursively transferred from peg 2 to peg 3 as follows. For some integer $t$ such that $1\leq t\leq n$,
\begin{itemize}
\item
transfer the $n-t$ smallest disks from peg $2$ to peg $k$ in $\G_{k}(n-t)$ moves;
\item
consider the remaining $k-1$ pegs and the subgraph obtained after deleting the vertex of peg $k$, which is the star graph $\mathrm{S}_{k-1}$, and transfer the $t$ largest disks from peg $2$ to peg $3$ in $\G_{k-1}(t)$ moves;
\item
transfer the $n-t$ smallest disks from peg $k$ to peg 3 in $\G_{k}(n-t)$ moves.
\end{itemize}
We choose the integer $t$ such that the number of moves $2\cdot\G_{k}(n-t)+\G_{k-1}(t)$ is minimized. 
Thus, the number of moves of this algorithm is 
$
\min_{1 \le t \le n}\bigl\{ 2\cdot \G_{k}(n-t) + \G_{k-1}(t) \bigr\}
$, 
which is, by the assumption of induction, equal to $\G_{k}(n)$ with $(p_3, q_3) = (3,2)$ and $(p_i, q_i) = (2, 1)$ for $4 \leq i \leq k$. 
This completes the proof of Theorem~\ref{thm3}.
\end{proof}

\section*{Acknowledgements}
The authors would like to thank the two anonymous reviewers for the time they spent on the reading of our manuscript and for the comments they posted, especially for the simpler proof of Proposition~\ref{prop3} that one of the reviewers suggested us.

\vspace{2cm}
\noindent\textbf{Authors' Address:}

\begin{itemize}

\item Jonathan Chappelon,\\
Laboratoire de Math\'ematiques Pures et Appliqu\'ees,\\
Universit\'e du Littoral C\^ote d'Opale\\
50 rue F. Buisson, B.P. 699,\\
F-62228 Calais Cedex, France\\
e-mail: jonathan.chappelon@lmpa.univ-littoral.fr

\item Akihiro Matsuura,\\
School of Science and Engineering,\\
Tokyo Denki University\\
Ishizaka, Hatoyama-cho, Hiki, 350-0394, Japan\\
e-mail: matsu@rd.dendai.ac.jp

\end{itemize}

\end{document}